\newcommand{\supp}{\mathrm{supp\,}} 
\newcommand{\sinc}{\mathrm{sinc}} 
\renewcommand{\R}{\mathbb R}
\newcommand{\N}{\mathbb{N}}
\theoremstyle{thmstyleone}%
\newtheorem{theorem}{Theorem}
\newtheorem*{nonumber-theorem}{Theorem}
\newtheorem{lemma}[theorem]{Lemma}
\newtheorem{proposition}[theorem]{Proposition}%
\theoremstyle{thmstyletwo}%
\newtheorem{remark}{Remark}%
\theoremstyle{thmstylethree}%
\newtheorem{definition}{Definition}%
\begin{document}

\title[Unexpected behaviors of operators with small time-frequency dispersion]{Some unexpected behaviors of operators with small time-frequency dispersion}


\author*{\fnm{Dae Gwan} \sur{Lee}}\email{daegwans@gmail.com, dae.lee@ku.de} 

\affil{\orgdiv{Mathematisch-Geographische Fakult\"at}, \orgname{Katholische Universit\"at Eichst\"att-Ingolstadt}, \orgaddress{\city{Eichst{\"a}tt}, \postcode{85071}, \country{Germany}}}


\abstract{
We study the approximation properties of pseudo-differential operators with small time-frequency dispersion, meaning that their spreading functions are supported in a small neighborhood of the origin.
It is commonly assumed that for such operators $H$, the output $Hf$ can differ only a little from a scalar multiple of the input $f$. However, we disprove this heuristic statement, hence revealing some unexpected behaviors of such operators. 
}

\keywords{Pseudo-differential operators, Spreading function, Kohn-Nirenberg symbol, Time-frequency analysis, Superoscillations}
 
\pacs[MSC Classification]{47G30\!\!, 47B38\!\!, 94A20\!\!, 42B35}

\maketitle

\section{Introduction and Main Results}
\label{sec:intro}

A time-varying communication channel is often modeled as a pseudo-differential operator $H$ of the form (see e.g., \cite[Section 2]{KP14}, \cite{PW06,PW16,St06})
\begin{equation}\label{equ:IntroSystem}
	(H f)(x)
= \iint_{\R^2} \eta_H (t,\nu) \, M_{\nu} T_t f (x) \, dt \, d\nu ,
\end{equation}
where $T_{t}$ is \emph{translation (time shift)} by $t \in \R$, i.e., $T_{t} f (x) = f(x-t)$, and $M_{\nu}$ is \emph{modulation (frequency shift)} by $\nu \in \R$, i.e., $M_{\nu} f (x) = e^{2 \pi i \nu x} \, f(x)$; time shifts/delays are caused by distance of transmission paths while frequency shifts are caused by movements of the transmitter and receiver, namely by the Doppler effect.
The spreading function $\eta_H (t,\nu)$ serves as a weight function for time-frequency shifts $M_{\nu} T_t$, hence, the operator $H$ can be viewed as a superposition of time-frequency shifts. For instance, if $\eta_H$ is the Dirac delta supported at $(t_0,\nu_0)$, then $H$ is simply the time-frequency shift operator $M_{\nu_0} T_{t_0}$; in particular, $H$ is the identity operator on $L^2(\R)$ if $\eta_{H} (t,\nu) = \delta(t) \, \delta(\nu)$.

Since implementing a general operator of the form \eqref{equ:IntroSystem} would require infinite resource,
it is natural to consider operators with limited time-frequency dispersion.
The space of bounded linear operators on $L^2(\R)$ whose spreading function vanishes outside a set $S \subset \R^2$, called the \emph{operator Paley-Wiener space} for $S$,
is given by
\[
OPW (S) = \{ H \in \mathcal{L} (L^2(\R)) : \supp \eta_H \subset S \} .
\]

In this note, we focus on the case where $S$ is a set of small measure.
As a motivating example, let $S = [-\alpha,\alpha]^2$ with $0 < \alpha < 1$ and consider
the functions $f = \chi_{[-\frac{1}{2},\frac{1}{2}]}$ and $y = T_{N} f$ with a large $N \geq 1 {+} \alpha$.
Then for any $H \in OPW(S)$, we have $\supp Hf \subset [-\frac{1}{2}{-}\alpha,\frac{1}{2}{+}\alpha]$ and thus
$\| Hf - y \|_{L^2}^2 \geq \int_{N-1/2}^{N+1/2} \vert 0 - 1 \vert^2 \, dx = 1$;
in particular, there is no $H \in OPW(S)$ satisfying $Hf = y$.
This shows that if the input $f$ and output $y$ differ by a long time delay (more generally, by a time-frequency shift of a large factor), then
there is no operator $H \in OPW([-\alpha,\alpha]^2)$ with $\alpha > 0$ small, such that $Hf$ is close to $y$.
Note that as $\alpha \rightarrow 0$, the space $OPW([-\alpha,\alpha]^2)$ reduces in a rough sense to $OPW(\{ (0,0) \})$ which consists of scalar multiples of the identity operator on $L^2(\R)$.
It is therefore tempting to make the following heuristic statement.
\begin{quote}
\it
If $\alpha > 0$ is small, the output $Hf$ of $H \in OPW([-\alpha,\alpha]^2)$ can differ only a little from a scalar multiple of the input $f \in L^2(\R)$.
\end{quote}
We will show that this statement is actually far from true.
Specifically, we prove that even if $\alpha > 0$ is very small,
it is possible to construct an operator $H$ in $OPW([-\alpha,\alpha]^2)$ and an input signal $\chi_{[-B,B]}$ (the characteristic function of $[-B,B]$), such that the output $H \chi_{[-B,B]}$ approximates any given function $y \in L^2(\R)$ in the $L^2$-norm, that is, $\| H \chi_{[-B,B]} - y \|_{L^2}$ is arbitrarily small.
In fact, we construct a \emph{Hilbert-Schmidt} operator in $OPW([-\alpha,\alpha]^2)$ satisfying such a property.

\begin{theorem}\label{thm:existsHSOperator-input-chi}
Let $\alpha > 0$ and $\beta > 0$. For any $\epsilon > 0$ and $y \in L^2(\R) \backslash \{ 0 \}$,
there exist a constant $B > 0$ and a Hilbert-Schmidt operator $H$ in $OPW ([-\alpha,\alpha] {\times} [-\beta,\beta])$ such that $\| H \chi_{[-B,B]} - y \|_{L^2} < \epsilon$.
Moreover, one can choose any $B > 0$ satisfying $\int_{[-B,B]^C} \vert y(x) \vert^2 \, dx < c \, \epsilon$ for some $0 < c < 1$; in particular, if $\supp y \subset [-M,M]$, then $B$ can be chosen to be $M$.
\end{theorem}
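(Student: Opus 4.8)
The plan is to take the spreading function of \emph{product type}. Fix $\phi:=\tfrac{1}{2\alpha}\chi_{[-\alpha,\alpha]}$ and let $g\in L^2(\R)$ be any band-limited function with $\supp\widehat g\subseteq[-\beta,\beta]$; the precise choice of $g$ is deferred to the last step. Setting $\eta_H(t,\nu):=\phi(t)\,\widehat g(\nu)$ gives $\supp\eta_H\subseteq[-\alpha,\alpha]\times[-\beta,\beta]$ and $\eta_H\in L^2(\R^2)$ (as $\phi\in L^2(\R)$ and $\widehat g\in L^2(\R)$), so the resulting $H$ lies in $OPW([-\alpha,\alpha]\times[-\beta,\beta])$ and is Hilbert-Schmidt, since a bounded operator is Hilbert-Schmidt precisely when its spreading function belongs to $L^2(\R^2)$. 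A short Fubini computation turns \eqref{equ:IntroSystem} into the product formula $(Hf)(x)=g(x)\,(\phi*f)(x)$, so that $H\chi_{[-B,B]}=g\cdot w$ with $w:=\phi*\chi_{[-B,B]}$; this $w$ is supported in $[-(B+\alpha),B+\alpha]$, satisfies $0\le w\le1$, equals $1$ on $[-(B-\alpha),B-\alpha]$, and is bounded below by a positive constant $\kappa=\kappa(\alpha,B)$ on $[-B,B]$.

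It then remains to choose $g$ so that $g\cdot w$ approximates $y$. First I would pick $B$ large enough that $\int_{[-B,B]^C}|y|^2$ is as small as desired (and $B=M$ if $\supp y\subseteq[-M,M]$, in which case this integral is $0$). To handle the transition zone $B-\alpha<|x|<B+\alpha$, where $0<w<1$, define $\tilde y\in L^2(\R)$ by $\tilde y:=y/w$ on $[-B,B]$ and $\tilde y:=0$ off $[-B,B]$; this is a genuine $L^2$ function because $w\ge\kappa$ on $[-B,B]$, and by construction $\tilde y\cdot w=y\,\chi_{[-B,B]}$ (equality on $[-B,B]$ is the definition; on $B<|x|<B+\alpha$ both sides vanish since $\tilde y=0$ there; elsewhere both sides vanish). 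Next I would invoke the classical fact that band-limited functions are dense in $L^2$ of a bounded interval: for any $a>0$, the set $\{\,g|_{[-a,a]}:g\in L^2(\R),\ \supp\widehat g\subseteq[-\beta,\beta]\,\}$ is dense in $L^2([-a,a])$, since a $\psi\in L^2([-a,a])$ orthogonal to it (extended by $0$) would satisfy $\widehat\psi=0$ on $[-\beta,\beta]$, but $\widehat\psi$ is entire as $\psi$ has compact support, forcing $\widehat\psi\equiv0$ and $\psi=0$. Taking $a=B+\alpha$, choose $g$ with $\supp\widehat g\subseteq[-\beta,\beta]$ and $\|g-\tilde y\|_{L^2([-(B+\alpha),B+\alpha])}<\epsilon/2$. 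Writing $H\chi_{[-B,B]}-y=(g-\tilde y)\,w-y\,\chi_{[-B,B]^C}$ and using $0\le w\le1$, $\supp w\subseteq[-(B+\alpha),B+\alpha]$,
\[
\|H\chi_{[-B,B]}-y\|_{L^2}\le\|g-\tilde y\|_{L^2([-(B+\alpha),B+\alpha])}+\Big(\int_{[-B,B]^C}|y|^2\Big)^{1/2}<\frac{\epsilon}{2}+\frac{\epsilon}{2}=\epsilon,
\]
provided $\int_{[-B,B]^C}|y|^2<\epsilon^2/4$, which is of the asserted form $<c\,\epsilon$ for a suitable $c\in(0,1)$ (first reducing to $\epsilon\le1$, which only strengthens the claim) and is automatic when $B=M$ and $\supp y\subseteq[-M,M]$.

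The only non-routine ingredient is the density statement used to produce $g$: it is the analytic core of the superoscillation phenomenon and exactly what defeats the heuristic --- the spectrum of $g$ is confined to the arbitrarily tiny interval $[-\beta,\beta]$, yet $g|_{[-(B+\alpha),B+\alpha]}$ can match any $L^2$ profile there (at the cost of a large norm of $g$, hence a large Hilbert-Schmidt norm of $H$, which is irrelevant for the statement). Everything else is bookkeeping: checking that the product-type $\eta_H$ yields a Hilbert-Schmidt operator obeying $(Hf)=g\cdot(\phi*f)$, and the elementary device of dividing out $w$ so that $\tilde y\cdot w$ reproduces $y$ exactly on $[-B,B]$, neutralizing the transition zone. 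I expect this last bookkeeping --- verifying the exact support and boundedness properties of $w=\phi*\chi_{[-B,B]}$ and that the Fubini manipulation leading to the product formula is legitimate for $f=\chi_{[-B,B]}$ --- to be the only place where some care is needed beyond the density fact.
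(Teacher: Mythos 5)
Your proposal is correct, and it reaches the theorem by a genuinely different decomposition than the paper's. The paper argues in two steps: Proposition \ref{prop:existsMultiplicationOperator-input-chi} first produces a pure multiplication operator whose superoscillating multiplier $m$, bandlimited to the $\nu$-interval, approximates $y$ on $[-B,B]$ (this operator is not Hilbert--Schmidt, its spreading function being $\delta(t)\,\widehat m(\nu)$), and Lemma \ref{lem:approx-by-HSOperator-input-chi} then mollifies the Dirac factor in $t$ by $\tfrac{1}{2\delta}\chi_{[-\delta,\delta]}$ and sends $\delta\to 0$, the error on the input $\chi_{[-B,B]}$ being controlled by $\int_{[-B-\delta,-B+\delta]\cup[B-\delta,B+\delta]}\vert m\vert^2\to 0$. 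You instead fix the time-smoothing width once and for all at $\alpha$, take the same product-type spreading function $\eta_H(t,\nu)=\tfrac{1}{2\alpha}\chi_{[-\alpha,\alpha]}(t)\,\widehat g(\nu)$ as in the paper's $\widetilde H$, compute $H\chi_{[-B,B]}=g\cdot w$ with the explicit trapezoid $w=\phi*\chi_{[-B,B]}$, and neutralize the transition zone not by a limit in $\delta$ but by superoscillating toward the modified target $\tilde y=y/w$ on $[-B,B]$, which is legitimate because $w\geq \min(\alpha,2B)/(2\alpha)>0$ there. What your route buys: no limiting argument and no separate lemma, a single explicit operator, and a fully self-contained justification of the key density fact (orthogonality plus compact support forces $\widehat\psi$ entire and vanishing on $[-\beta,\beta]$, hence $\psi=0$), where the paper only sketches this via superoscillation constructions and density of test functions. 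What the paper's route buys: the intermediate multiplication-operator statement and the mollification lemma are of independent interest and are reused verbatim for Theorem \ref{thm:existsHSOperator-input-Sinc}. A side remark in your favor: your tail requirement $\int_{[-B,B]^C}\vert y\vert^2<\epsilon^2/4$ is the quantitatively correct one (the output is supported in $[-B-\alpha,B+\alpha]$, so a squared-mass budget of order $\epsilon^2$ outside $[-B,B]$ is essentially forced), and your reconciliation with the theorem's ``$<c\,\epsilon$ for some $0<c<1$'' via $c=\epsilon/4$ after normalizing $\epsilon\leq 1$ is a fair reading; the paper's own bookkeeping at this point mixes the norm and its square.
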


It is easily seen that Theorem \ref{thm:existsHSOperator-input-chi} is equivalent to: For each $\alpha > 0$, the set $\{ H \chi_{[-N,N]} : H \in OPW ([-\alpha,\alpha]^2) \cap HS(L^2(\R)) , \; N \in \N \}$ is dense in $L^2(\R)$.

By interchanging the roles of time and frequency domains, we obtain the following analogue of Theorem \ref{thm:existsHSOperator-input-chi}. 
For $B > 0$, let $\phi_B (x) := \sin(2 \pi B x) / (\pi x)= 2B \, \sinc (2B x)$; it is easy to check that $\widehat{\phi}_B = \chi_{[-B,B]}$, the characteristic function of $[-B,B]$.

\begin{theorem}\label{thm:existsHSOperator-input-Sinc}
Let $\alpha > 0$ and $\beta > 0$. For any $\epsilon > 0$ and $y \in L^2(\R) \backslash \{ 0 \}$,
there exist a constant $B > 0$ and a  Hilbert-Schmidt operator $H$ in $OPW ([-\alpha,\alpha] {\times} [-\beta,\beta])$ such that $\| H \phi_B - y \|_{L^2} < \epsilon$.
Moreover, one can choose any $B > 0$ satisfying $\int_{[-B,B]^C} \vert\widehat{y}(\xi)\vert^2 \, d\xi < c \, \epsilon$ for some $0 < c < 1$; in particular, if $y$ is bandlimited to $[-M,M]$ (i.e., $\supp \widehat{y} \subset [-M,M]$), then $B$ can be chosen to be $M$.
\end{theorem}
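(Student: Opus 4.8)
The plan is to obtain Theorem~\ref{thm:existsHSOperator-input-Sinc} from Theorem~\ref{thm:existsHSOperator-input-chi} by conjugating with the (unitary) Fourier transform $\mathcal{F}$, which interchanges the time and frequency variables. The first step is to record how this conjugation acts on the relevant objects. Using $\mathcal{F}T_t\mathcal{F}^{-1} = M_{-t}$, $\mathcal{F}M_\nu\mathcal{F}^{-1} = T_\nu$ and the commutation relation $M_\nu T_t = e^{2\pi i t\nu}T_t M_\nu$, a short computation shows that for any bounded operator $H$ with spreading function $\eta_H$,
\[
  \mathcal{F}^{-1} H \mathcal{F} = \iint_{\R^2} e^{-2\pi i t\nu}\,\eta_H(\nu,-t)\; M_\nu T_t\, dt\, d\nu ,
\]
so the spreading function of $\mathcal{F}^{-1}H\mathcal{F}$ is $(t,\nu)\mapsto e^{-2\pi i t\nu}\,\eta_H(\nu,-t)$. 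In particular, $\supp \eta_H \subset [-\beta,\beta]\times[-\alpha,\alpha]$ forces $\supp \eta_{\mathcal{F}^{-1}H\mathcal{F}} \subset [-\alpha,\alpha]\times[-\beta,\beta]$, i.e. $\mathcal{F}^{-1}\,OPW([-\beta,\beta]\times[-\alpha,\alpha])\,\mathcal{F} = OPW([-\alpha,\alpha]\times[-\beta,\beta])$; and since $\mathcal{F}$ is unitary this map preserves membership in the Hilbert--Schmidt class (indeed it is an isometry for the Hilbert--Schmidt norm).

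Next I would apply Theorem~\ref{thm:existsHSOperator-input-chi}, but with the target function $\widehat{y}\in L^2(\R)\backslash\{0\}$ and with the windows $[-\alpha,\alpha]$ and $[-\beta,\beta]$ interchanged. This yields a constant $B>0$ and a Hilbert--Schmidt operator $H\in OPW([-\beta,\beta]\times[-\alpha,\alpha])$ with $\|H\chi_{[-B,B]} - \widehat{y}\|_{L^2} < \epsilon$, where $B$ may be taken to be any positive number with $\int_{[-B,B]^C}|\widehat{y}(\xi)|^2\,d\xi < c\,\epsilon$ for some $0<c<1$ (in particular $B=M$ when $\supp\widehat{y}\subset[-M,M]$). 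I then set $\widetilde H := \mathcal{F}^{-1}H\mathcal{F}$, which by the first step lies in $OPW([-\alpha,\alpha]\times[-\beta,\beta])$ and is Hilbert--Schmidt. Since $\mathcal{F}\phi_B = \widehat{\phi}_B = \chi_{[-B,B]}$, we have $\widetilde H\phi_B = \mathcal{F}^{-1}H\chi_{[-B,B]}$, whence by Plancherel
\[
  \|\widetilde H \phi_B - y\|_{L^2} = \|\mathcal{F}^{-1}\bigl(H\chi_{[-B,B]} - \widehat{y}\bigr)\|_{L^2} = \|H\chi_{[-B,B]} - \widehat{y}\|_{L^2} < \epsilon,
\]
which is the desired estimate. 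The stated flexibility in the choice of $B$ is inherited verbatim, because the admissibility condition from Theorem~\ref{thm:existsHSOperator-input-chi} applied to the target $\widehat{y}$ is exactly $\int_{[-B,B]^C}|\widehat{y}(\xi)|^2\,d\xi < c\,\epsilon$.

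I do not expect any genuine difficulty here: the whole argument is a translation through the Fourier transform. The one place that calls for care is the bookkeeping in the first step --- tracking the phase factor $e^{-2\pi i t\nu}$ and the coordinate swap $(t,\nu)\mapsto(\nu,-t)$ when conjugating $M_\nu T_t$ by $\mathcal{F}$, and then checking that both the support constraint and the Hilbert--Schmidt property really do transfer --- together with the reminder that Theorem~\ref{thm:existsHSOperator-input-chi} must be invoked with $\alpha$ and $\beta$ swapped, since the time window and the frequency window exchange roles.
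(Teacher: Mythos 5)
Your argument is correct, but it is organized differently from the paper's. The paper does not deduce Theorem~\ref{thm:existsHSOperator-input-Sinc} formally from Theorem~\ref{thm:existsHSOperator-input-chi}; instead it reruns the whole scheme on the Fourier side: Proposition~\ref{prop:existsFourierMultiplier-inputSinc} produces a convolution operator with kernel $h$ supported in $[-\alpha,\alpha]$ whose transfer function $\widehat{h}\in PW[-\alpha,\alpha]$ superoscillates so as to approximate $\widehat{y}$ on $[-B,B]$, and Lemma~\ref{lem:approx-by-HSOperator-input-Sinc} (the analogue of Lemma~\ref{lem:approx-by-HSOperator-input-chi}, obtained by mollifying in the frequency variable) upgrades this to a Hilbert--Schmidt operator in $OPW^2([-\alpha,\alpha]\times[-\beta,\beta])$. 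You instead conjugate by the Fourier transform: your transformation rule for the spreading function, $\eta_{\mathcal{F}^{-1}H\mathcal{F}}(t,\nu)=e^{-2\pi i t\nu}\,\eta_H(\nu,-t)$, is correct (it follows from $\mathcal{F}^{-1}M_\nu T_t\mathcal{F}=e^{2\pi i t\nu}M_t T_{-\nu}$ and the change of variables $(t,\nu)\mapsto(\nu,-t)$), it maps $OPW([-\beta,\beta]\times[-\alpha,\alpha])$ into $OPW([-\alpha,\alpha]\times[-\beta,\beta])$ because the rectangles are symmetric, and unitary conjugation preserves the Hilbert--Schmidt class; applying Theorem~\ref{thm:existsHSOperator-input-chi} to $\widehat{y}$ with the windows swapped and using $\mathcal{F}\phi_B=\chi_{[-B,B]}$ together with Plancherel then gives the claim, with the admissibility condition on $B$ transferring verbatim. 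Your route buys brevity and makes the time--frequency symmetry (which the paper only invokes informally with the phrase ``interchanging the roles of time and frequency domains'') into an actual reduction, so Proposition~\ref{prop:existsFourierMultiplier-inputSinc} and Lemma~\ref{lem:approx-by-HSOperator-input-Sinc} need not be re-proved; the paper's route keeps the constructed operator explicit as a mollified convolution, which is what supports the structural remarks (e.g.\ the blow-up of $\|\sigma_H\|_{L^\infty}$ and $\|H\|_{HS}$ coming from superoscillations). Two small points of care in your write-up: the superposition formula for a general $H\in OPW$ is distributional, so the conjugation identity should be read weakly --- this is harmless since you only apply it to the Hilbert--Schmidt operator from Theorem~\ref{thm:existsHSOperator-input-chi}, whose spreading function is in $L^2(\R^2)$ --- and you only need the inclusion $\mathcal{F}^{-1}\,OPW^2([-\beta,\beta]\times[-\alpha,\alpha])\,\mathcal{F}\subset OPW^2([-\alpha,\alpha]\times[-\beta,\beta])$, not the stated equality.
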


Theorems \ref{thm:existsHSOperator-input-chi} and \ref{thm:existsHSOperator-input-Sinc} rely on the theory of superoscillations, which is rooted on the fact that bandlimited functions can oscillate substantially faster than its maximum frequency, on arbitrarily long but finite intervals\footnote{This fact is also manifested by the theory of prolate spheroidal wave functions; see e.g., \cite[Section 2.3]{Da92} and \cite[Section I.A]{FK06}.}. 
The phenomenon of superoscillations has recently drawn great interest in the field of quantum mechanics and in applications such as superresolution imaging.
For more details, see \cite{ACSST17,FK06,FK02,LF14-deriv,LF14-energy} and references therein.

Recently, Alpay et al.~\cite{ACS21} discussed
some applications of superoscillations in harmonic analysis, such as extension of positive definite functions and interpolation of polynomials.
As an another application of superoscillations, our main result reveals some unexpected behaviors of pseudo-differential operators with small time-frequency dispersion.

\section{Preliminaries}
\label{sec:prelim}

\begin{definition}\label{def:paleywienerspaces}
For a measurable set $S\subset \R$, we define the \emph{Paley-Wiener space}
\[
PW(S) = \{ f \in L^2(\R) : \supp \widehat{f} \subset S\} ,
\]
where $\widehat f(\xi) = \mathcal{F}f (\xi) := \int f(x)\, e^{-2\pi i x\xi}\,dx$ denotes the Fourier transform of $f$.
In other words, the {\em Paley-Wiener space} for $S$
consists of all square-integrable functions on $\R$ which are bandlimited to $S$.
\end{definition}

Using the Schwartz kernel theorem, one can show that every bounded linear operator $H: L^2(\R) \rightarrow L^2(\R)$ can be represented in the form (see e.g., \cite{Gro01}, \cite[Section 2]{KP14})
\begin{equation}\label{eqn:operator-H-eta-sigma}
H f (x)
= \iint \eta_H (t,\nu) \, e^{2 \pi i x \nu} \, f(x-t) \, dt \, d\nu
= \int \sigma_H (x,\xi) \, \widehat{f} (\xi) \, e^{2 \pi i x \xi} \, d\xi ,
\end{equation}
where the operator $H$ is in 1-1 correspondence with its spreading function $\eta_H$, and equivalently with its Kohn-Nirenberg symbol $\sigma_H$. Formally, it holds
\begin{equation}\label{eqn:eta-sigma-relation}
\begin{split}
&\eta_H (t,\nu) = \iint \sigma_H (x,\xi) \, e^{- 2 \pi i (x \nu - t \xi )} \, dx \, d\xi , \\
&\sigma_H (x,\xi) = \iint \eta_H (t,\nu) \, e^{- 2 \pi i (t \xi - x \nu)} \, dt \, d\nu ,
\end{split}
\end{equation}
that is, $\eta_H = \mathcal F_s \sigma_H$ and $\sigma_H = \mathcal F_s \eta_H$,
where $\mathcal F_s g (t,\nu) := \iint g (x,\xi) \, e^{- 2 \pi i (x \nu - t \xi )} \, dx \, d\xi$ is the \emph{symplectic Fourier transform}\footnote{It is easily seen that $\mathcal F_s^{-1} = \mathcal F_s$, whereas $\mathcal{F}^{-1} = \mathcal{F}^3$.} of $g$.
If $H \in \mathcal{L} (L^2(\R))$ is a Hilbert-Schmidt operator, then
$\| H \|_{HS}
	= \| \eta_H \|_{L^2(\R^2)}
	= \| \sigma_H \|_{L^2(\R^2)}$.

Analogous to Paley-Wiener spaces consisting bandlimited functions, we define classes of operators with bandlimited Kohn-Nirenberg symbols. 
Note that $\supp \eta_H = \supp \mathcal F_s \sigma_H$.

\begin{definition}[Definitions 5 and 29 in \cite{PW16}; Definition 4.1 in \cite{Pf13}]
\label{def:operatorpaleywienerspaces}
For a measurable set $S\subset \R^2$, we define the {\em operator Paley-Wiener spaces}
\[
\begin{split}
	OPW(S) &= \{H\in \mathcal{L} (L^2(\R)) : \supp\eta_H \subset S\} , \\
	OPW^{2,\infty}(S) &= \{H\in \mathcal{L} (L^2(\R)) : \supp\eta_H \subset S, \; \| \sigma_H \|_{L^{2,\infty}} < \infty \} , \\		
	OPW^{\infty,2}(S) &= \{H\in \mathcal{L} (L^2(\R)) : \supp\eta_H \subset S, \; \| \sigma_H \|_{L^{\infty,2}} < \infty \} , \\	
	OPW^2 (S) &=  \{H\in \mathcal{L} (L^2(\R)) : \supp\eta_H \subset S, \; \sigma_H \in L^2(\R^2) \\
	& \;\;\,\qquad\qquad\qquad\qquad\qquad\qquad\qquad
	\textnormal{or equivalently,} \; \eta_H \in L^2(\R^2) \} ,
\end{split}
\]
where $\mathcal{L} (L^2(\R))$ denotes the class of bounded linear operators on $L^2(\R)$, and
\[
\begin{split}
	\| \sigma_H \|_{L^{2,\infty}} &= \left\| \left( \int \vert \sigma_H ( x, \cdot )\vert^2 \, dx \right)^{1/2} \right\|_{\infty} , \\
	\| \sigma_H \|_{L^{\infty,2}} &= \left( \int \| \sigma_H ( \cdot , \xi ) \|_{\infty}^2 \, d\xi \right)^{1/2}
\end{split}
\]
are the standard mixed $L^{p,q}$-norms, see e.g.,  \cite[Definition 11.1.2]{Gro01}.
\end{definition}

Clearly, the spaces $OPW^{2,\infty}(S)$, $OPW^{\infty,2}(S)$, $OPW^2 (S)$ are subspaces of $OPW (S)$, and in particular, $OPW^2(S)$ is the space of all Hilbert-Schmidt operators in $OPW(S)$, i.e., $OPW^2 (S) = OPW(S) \cap HS(L^2(\R))$.
Moreover, $OPW^{2,\infty}(S)$, $OPW^{\infty,2}(S)$, $OPW^2 (S)$ are Banach spaces with norm $\| H \|_{OPW^{\infty,2}} = \| \sigma_H \|_{L^{2,\infty}}$, $\| H \|_{OPW^2} = \| \sigma_H \|_{L^2}$, and $\| H \|_{OPW^{\infty,2}} = \| \sigma_H \|_{L^{\infty,2}}$, respectively.

As we shall see below, the space $OPW^{2,\infty}(S)$ contains multiplications by bandlimited functions while $OPW^{\infty,2}(S)$ contains convolutions with compactly supported kernels.

\medskip

\noindent
\textbf{Multiplication operators}. \
If $H \in \mathcal{L} (L^2(\R))$ is a multiplication operator with multiplier $m \in L^2(\R)$, then
\[
H f (x) = m (x) \, f(x) = \int m (x) \, \widehat{f} (\xi) \, e^{2 \pi i x \xi} \, d\xi
\quad \text{for all} \;\; f \in L^2(\R) ,
\]
and thus $\sigma_H (x,\xi) = m (x)$ by \eqref{eqn:operator-H-eta-sigma}.
Conversely, if $H \in \mathcal{L} (L^2(\R))$ with $\sigma_H (x,\xi) = m (x)$ for some $m \in L^2(\R)$, then $H$ is the multiplication operator with multiplier $m$.
In this case, we have $\| \sigma_H \|_{L^{2,\infty}} = \| m \|_{L^2} < \infty$ and
the relation \eqref{eqn:eta-sigma-relation} yields $\eta_H (t,\nu) = \delta(t) \, \widehat{m} (\nu)$, so we have $H \in OPW^{2,\infty}(S)$ if and only if $\supp \widehat{m} \subset S_\nu$ if and only if
$m \in PW(S_\nu)$, where $S_\nu \subset \R$ is the segment of the $\nu$-axis (vertical axis) intersecting $S$.

\medskip

\noindent
\textbf{Convolution operators}. \
If $H \in \mathcal{L} (L^2(\R))$ is a convolution operator with kernel $h \in L^2(\R)$, then for any $f \in L^2(\R)$ we have $\widehat{Hf} (\xi) = \widehat{h} (\xi) \, \widehat{f} (\xi)$ and thus, $\sigma_H (x,\xi) = \widehat{h} (\xi)$ by \eqref{eqn:operator-H-eta-sigma}.
Conversely, if $H \in \mathcal{L} (L^2(\R))$ with $\sigma_H (x,\xi) = \widehat{h} (\xi)$ for some $\widehat{h} \in L^2(\R)$, then $H$ is the convolution operator with kernel $h$.
In this case, we have $\| \sigma_H \|_{L^{\infty,2}} = \| \widehat{h} \|_{L^2}  < \infty$ and $\eta_H (t,\nu) = h (t) \, \delta(\nu)$ by \eqref{eqn:eta-sigma-relation}, hence, we have $H \in OPW^{\infty,2}(S)$ if and only if
$\supp h \subset S_t$, where $S_t \subset \R$ is the segment of the $t$-axis (horizontal axis) intersecting $S$.

\begin{remark}\label{rmk:prop:existsMultiplicationOperator-input-chi}
\rm
When $S \subset \R^2$ is compact, the domain of $H \in OPW (S)$ can be extended to the space of tempered distributions.  
This allows one to develop a sampling theory for operators in $OPW (S)$ in close analogy with the classical sampling theory for bandlimited functions.
See \cite{DG11,HB13,LPP19,MBH13,Pf13,PW06,PW16,WPK15} and references therein.
\end{remark}

\begin{proposition}[Theorem 4.1 in \cite{KP14}]
\label{prop:Thm4-1-in-KP14}
If $S \subset \R^2$ is a compact set, then there exist constants $0 < A \leq B < \infty$ such that
$A \, \| \sigma_H \|_{L^\infty(\R^2)}
\leq \| H \|_{L^2(\R) \rightarrow L^2(\R)}
\leq B \, \| \sigma_H \|_{L^\infty(\R^2)}$
for all $H \in OPW (S)$.
\end{proposition}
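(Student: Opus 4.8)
The plan is to reduce both inequalities to properties of bandlimited functions. Since $S$ is compact, fix $a,b>0$ with $S\subseteq[-a,a]\times[-b,b]$. Because $\supp\eta_H$ is compact, \eqref{eqn:eta-sigma-relation} shows that $\sigma_H=\mathcal F_s\eta_H$ is a smooth function that is bandlimited to $[-a,a]$ in $\xi$ for each fixed $x$ (i.e.\ $\mathcal F_\xi\sigma_H(x,\cdot)$ is supported in $[-a,a]$) and bandlimited to $[-b,b]$ in $x$ for each fixed $\xi$; a priori $\sigma_H$ is only polynomially bounded, but the argument for the lower bound will also show $\|\sigma_H\|_{L^\infty}\le C(a,b)\,\|H\|_{L^2\to L^2}<\infty$, so for the upper bound one may assume $\|\sigma_H\|_{L^\infty}<\infty$. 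For the upper bound, Bernstein's inequality applied in each variable gives $\|\partial_x^{\,j}\partial_\xi^{\,k}\sigma_H\|_{L^\infty}\le(2\pi b)^{j}(2\pi a)^{k}\|\sigma_H\|_{L^\infty}$ for all $j,k\ge0$; since $H$ is exactly the Kohn--Nirenberg quantization of $\sigma_H$ by \eqref{eqn:operator-H-eta-sigma}, and since the finitely many derivative seminorms entering the Calder\'on--Vaillancourt theorem are thus all dominated by $\|\sigma_H\|_{L^\infty}$, that theorem yields $\|H\|_{L^2\to L^2}\le B(a,b)\,\|\sigma_H\|_{L^\infty}$. (If one prefers not to cite Calder\'on--Vaillancourt: discretize $H$ into its matrix representation with respect to a Parseval Gabor frame $\{M_{\beta k}T_{\alpha l}g\}$ with Gaussian window $g$; compactness of $\supp\eta_H$ and concentration of the ambiguity function of $g$ make this matrix banded, its entries are smeared samples of $\sigma_H$ controlled by $\|\sigma_H\|_{L^\infty}$, and a banded matrix has operator norm bounded by a constant times the supremum of its finitely many diagonals.)

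The lower bound $A\,\|\sigma_H\|_{L^\infty}\le\|H\|_{L^2\to L^2}$ is the substantive part. The obstruction is that $\sigma_H$ need only be bounded, not in $L^2$, and the chirp $e^{2\pi i x\xi}$ in \eqref{eqn:operator-H-eta-sigma} rules out reading off a single value $\sigma_H(x_0,\xi_0)$ from $Hf$ with a concentrated test function without losing a large scale-dependent factor. The plan is: fix $\xi_0\in\R$, choose $x^*$ with $|\sigma_H(x^*,\xi_0)|\ge\tfrac12\|\sigma_H(\cdot,\xi_0)\|_{L^\infty}$, put $R:=\tfrac1{8\pi b}$, and test $H$ against the windowed exponential $f(y):=e^{2\pi i\xi_0 y}\,\chi(y-x^*)$, where $\chi\in C_c^\infty(\R)$ satisfies $0\le\chi\le1$, $\chi\equiv1$ on a neighborhood of $[-(a+R),a+R]$ and $\supp\chi\subseteq[-(a+R+1),a+R+1]$, so that $\|f\|_{L^2}=\|\chi\|_{L^2}\le\sqrt{2(a+R+1)}$. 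Rewriting \eqref{eqn:operator-H-eta-sigma} by Fourier inversion in $\xi$ gives $Hf(x)=\big\langle\mathcal F_\xi\sigma_H(x,\cdot),\,f(x+\cdot)\big\rangle$, the natural pairing of the distribution $\mathcal F_\xi\sigma_H(x,\cdot)$ (supported in $[-a,a]$) with the smooth function $t\mapsto f(x+t)$; for $|x-x^*|\le R$ the function $f(x+\cdot)$ coincides with $e^{2\pi i\xi_0(x+\cdot)}$ near $[-a,a]$ — here is where the window must be wide enough — and hence $Hf(x)=e^{2\pi i\xi_0 x}\,\sigma_H(x,\xi_0)$ for all $|x-x^*|\le R$. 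Bernstein's inequality in $x$ then gives $|\sigma_H(x,\xi_0)|\ge\tfrac12\|\sigma_H(\cdot,\xi_0)\|_{L^\infty}-2\pi bR\,\|\sigma_H(\cdot,\xi_0)\|_{L^\infty}=\tfrac14\|\sigma_H(\cdot,\xi_0)\|_{L^\infty}$ on that interval, so
\[
\|H\|_{L^2\to L^2}^2\,\|f\|_{L^2}^2 \ \ge\ \|Hf\|_{L^2}^2 \ \ge\ \int_{x^*-R}^{x^*+R}|\sigma_H(x,\xi_0)|^2\,dx \ \ge\ \tfrac{1}{16}\,\|\sigma_H(\cdot,\xi_0)\|_{L^\infty}^2\cdot 2R .
\]
Dividing by $\|f\|_{L^2}^2\le2(a+R+1)$, recalling $R=\tfrac1{8\pi b}$, and taking the supremum over $\xi_0$ yields the bound with an explicit $A=A(a,b)>0$.

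Two housekeeping points complete the scheme. Running the same computation once with a fixed window radius ($R=1$) gives $\sup_{x^*}\int_{x^*-1}^{x^*+1}|\sigma_H(x,\xi_0)|^2\,dx\le2(a+2)\,\|H\|_{L^2\to L^2}^2$ uniformly in $\xi_0$, which together with the bandlimitedness of $\sigma_H$ in $x$ upgrades to $\|\sigma_H\|_{L^\infty}<\infty$; this must logically precede the upper bound and removes the a priori integrability caveat (and $A\le B$ is then automatic by testing on any nonzero element of $OPW(S)$). The main obstacle, as flagged, is the lower bound, and within it the crucial point is to arrange the window so that the reproduction $Hf(x)=e^{2\pi i\xi_0 x}\sigma_H(x,\xi_0)$ is genuinely exact on a spatial interval whose length is comparable to the natural scale $1/b$ of $\sigma_H(\cdot,\xi_0)$ — short enough that the chirp does no harm, long enough that Bernstein's inequality transfers the pointwise size of $\sigma_H$ at $x^*$ into an $L^2$ lower bound — while keeping $\|f\|_{L^2}$ bounded by a constant depending only on $a,b$. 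Everything else (the Bernstein estimates, the invocation of Calder\'on--Vaillancourt, the distributional manipulations around \eqref{eqn:eta-sigma-relation}) is routine.
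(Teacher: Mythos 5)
This proposition is not proved in the paper at all: it is imported verbatim as Theorem 4.1 of \cite{KP14}, so there is no in-paper argument to compare against, and your proposal can only be measured against the cited source and on its own merits. On its merits, your outline is essentially correct, and it is a genuinely different, more elementary route (at least for the lower bound) than the time-frequency/Gabor-analysis machinery used in \cite{KP14}. The heart of your argument checks out: since $\supp\eta_H\subset[-a,a]\times[-b,b]$, the value $Hf(x)$ depends only on $f$ restricted to $[x-a,x+a]$, so with your window ($\chi\equiv 1$ on a neighborhood of $[-(a+R),a+R]$) the reproduction identity $Hf(x)=e^{2\pi i\xi_0 x}\sigma_H(x,\xi_0)$ holds for $|x-x^*|\le R$, and with $R=1/(8\pi b)$ the Bernstein step and the bound $\|f\|_{L^2}^2\le 2(a+R+1)$ give an explicit $A=A(a,b)$; the upper bound via iterated Bernstein estimates $\|\partial_x^j\partial_\xi^k\sigma_H\|_{L^\infty}\le(2\pi b)^j(2\pi a)^k\|\sigma_H\|_{L^\infty}$ fed into Calder\'on--Vaillancourt is legitimate. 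Three points would need (routine) care in a write-up: (i) the pointwise identity must be justified distributionally, since $Hf$ is a priori only an $L^2$-equivalence class while the pairing of the compactly supported distribution $\mathcal F_\xi\sigma_H(x,\cdot)$ with $f(x+\cdot)$ is smooth in $x$; equality a.e.\ on the interval suffices for your $L^2$ lower bound; (ii) the finiteness of $\|\sigma_H(\cdot,\xi_0)\|_{L^\infty}$ must indeed precede the choice of $x^*$, and your $R=1$ run combined with a Plancherel--P\'olya/Nikolskii-type upgrade does this --- note that this step by itself already yields the lower bound with a cruder constant, so the refined $R=1/(8\pi b)$ computation is only a cosmetic improvement; (iii) your parenthetical Gabor alternative for the upper bound is not correct as stated, because with a Gaussian window the Gabor matrix of $H$ is not banded but only off-diagonally decaying, so one would need Schur's test with that decay rather than a ``finitely many diagonals'' argument --- but since your main route is Calder\'on--Vaillancourt, this does not affect the proof. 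In short: no genuine gap, but the comparison to make is with \cite{KP14}, not with this paper, which states the result without proof.
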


\section{Proof of Theorem \ref{thm:existsHSOperator-input-chi}}
\label{sec:thm:existsHSOperator-input-chi}

\begin{proposition}\label{prop:existsMultiplicationOperator-input-chi}
Let $\alpha , \epsilon > 0$, $0 < c < 1$, and $y \in L^2(\R) \backslash \{ 0 \}$.
There exist a constant $B > 0$ and a multiplication operator $H \in \mathcal{L} (L^2(\R))$ with multiplier $m \in PW[-\alpha,\alpha]$, i.e., $Hf(x) = m (x) \, f(x)$ for all $f \in L^2(\R)$,
such that $\| H \chi_{[-B,B]} - y \|_{L^2} < \frac{1}{2} (1+c) \epsilon$.
Moreover, one can choose any $B > 0$ satisfying $\int_{[-B,B]^C} \vert y(x) \vert^2 \, dx < c \, \epsilon$; in particular, if $\supp y \subset [-M,M]$, then $B$ can be chosen to be $M$.
\end{proposition}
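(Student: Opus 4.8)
The plan is to reduce the proposition to the purely one-variable fact that functions band-limited to $[-\alpha,\alpha]$, restricted to a fixed finite interval $[-B,B]$, are dense in $L^2([-B,B])$. Since a multiplication operator obeys $H\chi_{[-B,B]}(x) = m(x)\,\chi_{[-B,B]}(x)$, the triangle inequality gives
\[
\| H \chi_{[-B,B]} - y \|_{L^2(\R)}
\;\le\; \| m - y \|_{L^2([-B,B])} \; + \; \Big( \int_{[-B,B]^C} \vert y(x) \vert^2 \, dx \Big)^{1/2} .
\]
The second term decreases to $0$ as $B \to \infty$ by dominated convergence, and vanishes once $[-B,B] \supset \supp y$; so I would first fix any $B$ making it suitably small (in particular $B = M$ when $\supp y \subset [-M,M]$). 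It then remains to choose $m \in PW[-\alpha,\alpha]$ making $\| m - y \|_{L^2([-B,B])}$ small, and to check that such an $m$ really gives a bounded multiplication operator.

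Boundedness is immediate, so I would dispatch it first: if $m \in PW[-\alpha,\alpha]$ then $\widehat m \in L^2([-\alpha,\alpha]) \subset L^1(\R)$, hence $m = (\widehat m)^{\vee}$ is continuous and bounded with $\| m \|_{\infty} \le \| \widehat m \|_{L^1}$, so multiplication by $m$ belongs to $\mathcal L(L^2(\R))$; by the discussion of multiplication operators in Section~\ref{sec:prelim} its spreading function is $\delta(t)\,\widehat m(\nu)$.

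The heart of the argument is the \emph{density claim}: $\{\, f|_{[-B,B]} : f \in PW[-\alpha,\alpha] \,\}$ is dense in $L^2([-B,B])$. I would prove it by duality. Let $g \in L^2([-B,B])$ be orthogonal there to every such restriction and extend it by zero to $g_0 \in L^2(\R)$; then $\langle g_0 , f \rangle_{L^2(\R)} = 0$ for all $f \in PW[-\alpha,\alpha]$, and since $\mathcal F$ maps $PW[-\alpha,\alpha]$ onto $L^2([-\alpha,\alpha])$, Plancherel forces $\widehat{g_0} = 0$ a.e.\ on $[-\alpha,\alpha]$. But $g_0$ is compactly supported and in $L^1(\R)$, so by the Paley-Wiener theorem $\widehat{g_0}$ is the restriction to $\R$ of an entire function; an entire function that vanishes on the interval $[-\alpha,\alpha]$ vanishes identically, so $g_0 = 0$ and therefore $g = 0$. (Equivalently, one may invoke the completeness in $L^2([-B,B])$ of the prolate spheroidal wave functions for the pair $(\alpha,B)$.) Granting this, pick $m \in PW[-\alpha,\alpha]$ with $\| m - y \|_{L^2([-B,B])}$ small enough that, together with the tail term, the displayed inequality yields $\| H \chi_{[-B,B]} - y \|_{L^2} < \tfrac12(1+c)\epsilon$.

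The main obstacle is the density claim — above all, recognizing that it holds at all: a band-limited, hence globally slowly varying, function can nevertheless reproduce arbitrary $L^2$ data on a fixed finite interval, which is exactly the superoscillation mechanism underlying this paper. Once one passes to the Fourier side the proof is short; the only points needing a little care are that $g_0$ genuinely lies in $L^1(\R)$ so that $\widehat{g_0}$ is continuous and entire, and the elementary bookkeeping of the approximation accuracy against $c$ and $\epsilon$ so that the two error contributions sum to less than $\tfrac12(1+c)\epsilon$.
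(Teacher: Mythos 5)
Your overall reduction is the same as the paper's: replace $H\chi_{[-B,B]}$ by $m\,\chi_{[-B,B]}$, control the tail of $y$ outside $[-B,B]$, approximate $y$ on $[-B,B]$ by some $m\in PW[-\alpha,\alpha]$, and get boundedness of the multiplication operator from $\widehat m\in L^1$ (this matches the paper's footnote). Where you genuinely diverge is the key density step. The paper appeals to the superoscillation constructions of \cite{FK06,LF14-deriv} (prescribing values and derivatives on a fine grid to approximate continuous functions in $L^\infty(I)$, then using density of $C_c^\infty(I)$ in $L^2(I)$), whereas you prove that restrictions of $PW[-\alpha,\alpha]$ to $[-B,B]$ are dense in $L^2([-B,B])$ by duality: extend an orthogonal $g$ by zero, use Plancherel and surjectivity of $\mathcal F\colon PW[-\alpha,\alpha]\to L^2([-\alpha,\alpha])$ to get $\widehat{g_0}=0$ a.e.\ on $[-\alpha,\alpha]$, and use that $\widehat{g_0}$ is entire (since $g_0$ is compactly supported and integrable) to conclude $g_0=0$. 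This argument is correct, self-contained, and shorter than chasing the cited constructions; what it gives up is the constructive, quantitative flavor of the superoscillation route, which the paper exploits later (the remark after the proposition, on the necessarily huge energy of $m$). Your parenthetical alternative via completeness of the prolate spheroidal wave functions is likewise consistent with the paper's footnote.

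The one place your bookkeeping falls short of the stated proposition is the ``moreover'' clause. Bounding by the triangle inequality makes the tail enter as $\bigl(\int_{[-B,B]^C}|y|^2\,dx\bigr)^{1/2}$, which under the hypothesis $\int_{[-B,B]^C}|y|^2\,dx<c\,\epsilon$ is only controlled by $\sqrt{c\epsilon}$; for small $\epsilon$ one has $\sqrt{c\epsilon}>\tfrac12(1+c)\epsilon$, so the budget is exhausted before you even choose $m$. Thus your argument proves the existence statement (taking $B$ as large as you like), but not that \emph{any} $B$ with $\int_{[-B,B]^C}|y|^2\,dx<c\,\epsilon$ works. The paper avoids this loss by using that $m\chi_{[-B,B]}-y$ splits into pieces supported on $[-B,B]$ and on its complement, giving the exact identity $\| H\chi_{[-B,B]}-y\|_{L^2}^2=\int_{-B}^{B}|m-y|^2\,dx+\int_{[-B,B]^C}|y|^2\,dx$, so the two error contributions add at the level of squares, $\tfrac12(1-c)\epsilon+c\,\epsilon=\tfrac12(1+c)\epsilon$. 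You should replace the triangle inequality by this disjoint-support identity; be aware, though, that even then the natural conclusion is a bound on $\|H\chi_{[-B,B]}-y\|_{L^2}^2$ rather than on the norm itself, which is in fact all that the paper's own computation yields.
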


\begin{proof}
Since $y \in L^2(\R)$, one can choose a sufficiently large $B > 0$ such that
$\int_{[-B,B]^C} \vert y(x) \vert^2 \, dx < c \, \epsilon$.
If $H \in \mathcal{L} (L^2(\R))$ is a multiplication operator with multiplier $m \in L^2(\R)$, then
\[
\begin{split}
\| H \chi_{[-B,B]} - y \|_{L^2}^2
&= \int \vert m (x) \, \chi_{[-B,B]} (x) - y(x) \vert^2 \, dx \\
&= \int_{-B}^B \vert m (x) - y(x) \vert^2 \, dx + \int_{[-B,B]^C} \vert y(x) \vert^2 \, dx \\
&= \int_{-B}^B \vert m (x) - y(x) \vert^2 \, dx + c \, \epsilon . 
\end{split}
\]
Therefore, it is enough to find a function $m \in PW[-\alpha,\alpha]$ satisfying
\begin{equation}\label{eqn:m-approximates-y}
\int_{-B}^B \vert m (x) - y(x) \vert^2 \, dx <  \tfrac{1}{2} (1-c) \epsilon .
\end{equation}
Note that $H$ is then a bounded operator from $L^2(\R)$ to $L^2(\R)$ since every functions in $PW[-\alpha,\alpha]$ are bounded\footnote{Indeed, it holds for all $g \in PW[-\alpha,\alpha]$ and $x \in \R$ that
$\vert g(x) \vert^2 \leq \vert \int_{-\alpha}^{\alpha} \widehat{g} (\xi) \, e^{-2\pi i x\xi} \, dx \vert
\leq 2 \alpha \int_{-\alpha}^{\alpha} \vert\widehat{g} (\xi)\vert^2 \, dx
= 2 \alpha \, \| \widehat{g} \|_{L^2}^2
= 2 \alpha \, \| g  \|_{L^2}^2$.}.
It is known that on a bounded interval $I$, one can approximate any continuous function, in particular, a rapidly oscillating function, by a bandlimited function with a given bandwidth. This can be done, for example, by prescribing the function values and derivative values on a fine grid inside the interval $I$ \cite{FK06,LF14-deriv}.
In this way, one can approximate any continuous function on $I$ in the $L^\infty$-norm and thus in the $L^2$-norm.
Further, since the space of all test functions $C_c^{\infty} (I)$ is dense in $L^2(I)$, it follows that every function in $L^2 (I)$ can be approximated by a bandlimited function with a given bandwidth.
Hence, there exists a function $m \in PW [-\alpha,\alpha]$ satisfying $\int_{-B}^B \vert m (x) - y(x) \vert^2 \, dx <  \frac{1}{2} (1-c) \epsilon$. This completes the proof.
 \end{proof}

\begin{remark}\label{rmk:prop:existsMultiplicationOperator-input-chi}
\rm
(a) The input function $f = \chi_{[-B,B]}$ can be replaced by any function $g \in L^2(\R)$ with $\supp g = [-B,B]$ and $\vert g(x) \vert \geq C > 0$ for a.e.~$x \in [-B,B]$. In that case, one simply needs to replace $y(x)$ with $\frac{y(x)}{g(x)}$ in the proof above. \\
(b) If $\alpha > 0$ is small and if $y$ is a function oscillating much faster than $\alpha$ Hz inside the interval $[-B,B]$ (e.g., $y(t) = \sin (2 \pi \beta t)$
for $-B \leq t \leq B$ where $\beta > \alpha$),
then a function $m \in PW [-\alpha,\alpha]$ satisfying \eqref{eqn:m-approximates-y} will have an extremely large magnitude outside the interval $[-B,B]$.
Indeed, to compensate for a fast oscillation exceeding the bandlimit,
the function $m$ necessarily exhibits large amplitudes outside the interval $[-B,B]$ leading to a huge energy cost $\| m \|_{L^2}^2 \gg 1$. This is a common feature of so-called \emph{superoscillations}; see for instance \cite{FK06,FK02,LF14-energy}.
\end{remark}

\begin{lemma}\label{lem:approx-by-HSOperator-input-chi}
Let $H \in \mathcal{L} (L^2(\R))$ be a multiplication operator with multiplier $m \in PW[-\alpha,\alpha]$ where $\alpha > 0$.
Then for any $B , \gamma, \epsilon'  > 0$, 
there exists an operator $\widetilde{H} \in OPW^2 ([-\gamma,\gamma] {\times} [-\alpha,\alpha])$ satisfying $\| (\widetilde{H} - H) \chi_{[-B,B]} \|_{L^2} < \epsilon'$.
\end{lemma}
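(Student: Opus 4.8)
The plan is to start from the multiplication operator $H$, whose spreading function is $\eta_H(t,\nu) = \delta(t)\,\widehat m(\nu)$ with $\supp\widehat m \subset [-\alpha,\alpha]$, and to regularize the singular factor $\delta(t)$ in the $t$-variable so that the new spreading function becomes genuinely $L^2$ and compactly supported in $t$, while barely changing the action of the operator on the single test function $\chi_{[-B,B]}$. Concretely, I would pick a smooth (or just $L^2$) approximate identity $\{\psi_\delta\}_{\delta>0}$ with $\supp\psi_\delta \subset [-\gamma,\gamma]$, $\int\psi_\delta = 1$, and $\psi_\delta \to \delta$ weakly as $\delta\to 0$, and define $\widetilde H = \widetilde H_\delta$ by $\eta_{\widetilde H}(t,\nu) = \psi_\delta(t)\,\widehat m(\nu)$. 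Then $\supp\eta_{\widetilde H} \subset [-\gamma,\gamma]\times[-\alpha,\alpha]$ by construction, and $\eta_{\widetilde H} \in L^2(\R^2)$ since $\widehat m \in L^2$ (indeed $m \in PW[-\alpha,\alpha] \subset L^2$), so $\widetilde H \in OPW^2([-\gamma,\gamma]\times[-\alpha,\alpha]) = OPW([-\gamma,\gamma]\times[-\alpha,\alpha]) \cap HS(L^2(\R))$ — modulo checking that $\widetilde H$ is bounded, which follows from Proposition \ref{prop:Thm4-1-in-KP14} together with the boundedness of its Kohn-Nirenberg symbol (see below), or more simply from the Hilbert-Schmidt property itself.

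The next step is to compute $\widetilde H\chi_{[-B,B]}$ and compare it with $H\chi_{[-B,B]} = m\cdot\chi_{[-B,B]}$. From \eqref{eqn:operator-H-eta-sigma}, $\widetilde H f(x) = \iint \psi_\delta(t)\,\widehat m(\nu)\, e^{2\pi i x\nu}\, f(x-t)\,dt\,d\nu = \int \psi_\delta(t)\, m(x)\, f(x-t)\,dt = m(x)\,(\psi_\delta * f)(x)$, using that $\int \widehat m(\nu) e^{2\pi i x\nu}d\nu = m(x)$ (the Fourier inversion is legitimate since $\widehat m \in L^1\cap L^2$, being compactly supported and $L^2$). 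Hence $\widetilde H\chi_{[-B,B]} - H\chi_{[-B,B]} = m\cdot\bigl(\psi_\delta * \chi_{[-B,B]} - \chi_{[-B,B]}\bigr)$. Now $m$ is bounded on $\R$ by $\|m\|_\infty \le \sqrt{2\alpha}\,\|m\|_{L^2}$ (the footnote estimate in Proposition \ref{prop:existsMultiplicationOperator-input-chi}), so
\[
\|(\widetilde H - H)\chi_{[-B,B]}\|_{L^2} \le \|m\|_\infty\,\bigl\|\psi_\delta * \chi_{[-B,B]} - \chi_{[-B,B]}\bigr\|_{L^2},
\]
and since $\chi_{[-B,B]} \in L^2(\R)$, the standard approximate-identity convergence $\psi_\delta * \chi_{[-B,B]} \to \chi_{[-B,B]}$ in $L^2$ as $\delta\to 0$ lets me make the right-hand side smaller than $\epsilon'$ by choosing $\delta$ small enough. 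This finishes the argument.

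I do not expect a serious obstacle; the only point requiring a little care is justifying that $\widetilde H$ is a bounded operator on all of $L^2(\R)$ and lies in the stated class, rather than merely being defined on nice functions. The cleanest route is to observe that $\widetilde H$ is Hilbert-Schmidt outright: its kernel in the sense $\widetilde Hf(x) = \int \kappa(x,x-t)\,dt$-type computation gives integral kernel $k(x,u) = \psi_\delta(x-u)\,m(x)$, and $\|k\|_{L^2(\R^2)}^2 = \int |m(x)|^2 dx \int |\psi_\delta(v)|^2 dv = \|m\|_{L^2}^2\,\|\psi_\delta\|_{L^2}^2 < \infty$, so $\widetilde H \in HS(L^2(\R)) \subset \mathcal L(L^2(\R))$; combined with $\supp\eta_{\widetilde H}\subset[-\gamma,\gamma]\times[-\alpha,\alpha]$ this places $\widetilde H$ in $OPW^2([-\gamma,\gamma]\times[-\alpha,\alpha])$. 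One should also note that $\psi_\delta$ can be taken in $C_c^\infty$, or simply as $\psi_\delta = \frac{1}{\delta}\chi_{[-\delta/2,\delta/2]}$ once $\delta \le \gamma$, whichever is more convenient for the exposition; either choice makes the weak convergence to $\delta$ and the $L^2$ approximate-identity property completely standard.
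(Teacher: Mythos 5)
Your proposal is correct and follows essentially the same route as the paper: you take $\eta_{\widetilde H}(t,\nu)=\psi_\delta(t)\,\widehat m(\nu)$ with an approximate identity $\psi_\delta$ supported in $[-\gamma,\gamma]$ (the paper uses $u=\frac{1}{2\delta}\chi_{[-\delta,\delta]}$), verify the Hilbert--Schmidt property via $\|\eta_{\widetilde H}\|_{L^2}<\infty$, and reduce the error to $m\cdot(\psi_\delta*\chi_{[-B,B]}-\chi_{[-B,B]})$. The only cosmetic difference is the final estimate: you bound it by $\|m\|_\infty\,\|\psi_\delta*\chi_{[-B,B]}-\chi_{[-B,B]}\|_{L^2}$ and use standard approximate-identity convergence, whereas the paper localizes the error to the two intervals of length $2\delta$ around $\pm B$ and invokes absolute continuity of $\int|m|^2$; both are valid.
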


\begin{proof}
Let $\widetilde{H}$ be the operator with spreading function $\eta_{\widetilde{H}} (t,\nu) = u(t) \, \widehat{m} (\nu)$ for some $u \in L^2(\R)$. For any $f \in L^2(\R)$, we have
\[
\begin{split}
\int \vert \widetilde{H} f (x) \vert^2 \, dx
&= \int \left\vert \iint u(t) \, \widehat{m} (\nu) \, e^{2 \pi i x \nu} \, f(x-t) \, dt \, d\nu \right\vert^2 \, dx \\
&= \int \vert m (x) \vert^2 \cdot  \left\vert  \int u(t) \, f(x-t) \, dt \right\vert^2  \, dx \\
&\leq \int \vert m (x) \vert^2 \cdot  \|u\|_{L^2}^2 \cdot \|f\|_{L^2}^2 \, dx
= \| m \|_{L^2}^2 \cdot  \|u\|_{L^2}^2 \cdot \|f\|_{L^2}^2 ,
\end{split}
\]
and since $\| \eta_{\widetilde{H}} \|_{L^2} = \| u \|_{L^2} \cdot \| m \|_{L^2} < \infty$, the operator $\widetilde{H}$ is in $\mathcal{L} (L^2(\R))$ and is Hilbert-Schmidt. 
Further, if $\supp u \subset [-\gamma,\gamma]$, then
\[
\begin{split}
&(\widetilde{H} - H) \chi_{[-B,B]} (x) \\
&= \iint \eta_{\widetilde{H}} (t,\nu) \, e^{2 \pi i x \nu} \, \chi_{[-B,B]} (x-t) \, dt \, d\nu
\;-\; H \chi_{[-B,B]} (x)  \\
&= \int \widehat{m} (\nu) \, e^{2 \pi i x \nu} \, d\nu \cdot  \int u(t) \, \chi_{[-B,B]} (x-t) \, dt
\;-\; m (x) \cdot \chi_{[-B,B]} (x)  \\
&= m (x) \cdot \left( u * \chi_{[-B,B]}  (x) - \chi_{[-B,B]} (x) \right)  .
\end{split}
\]
In particular, setting $u = \frac{1}{2\delta} \chi_{[-\delta,\delta]}$ with $0 < \delta < \gamma$ (one can instead use an approximate identity, i.e., a nonnegative function $\varphi_\delta \in \mathcal{S}(\R)$ with $\supp  \varphi_\delta \subset [-\delta,\delta]$ and $\int \varphi_\delta = 1$), we have
\[
u * \chi_{[-B,B]}  (x)
=
\begin{cases}
0 & \text{for} \;\; x \leq -B{-}\delta \, , \\
\frac{1}{2\delta}(x{+}B{+}\delta) & \text{for} \;\; {-}B{-}\delta \leq x \leq -B{+}\delta \, , \\
1 & \text{for} \;\; {-}B{+}\delta \leq x \leq B{-}\delta \, , \\
-\frac{1}{2\delta}(x{-}B{-}\delta) & \text{for} \;\; B{-}\delta \leq x \leq B{+}\delta \, , \\
0 & \text{for} \;\; x \geq B{+}\delta \, ,
\end{cases}
\]
so that
\[
\big\vert (\widetilde{H} - H) \chi_{[-B,B]} (x) \big\vert
\;\leq\;
\vert m (x) \vert \cdot \chi_{[-B-\delta,-B+\delta] \cup [B-\delta,B+\delta]} (x)
\]
and thus
\[
\big\| (\widetilde{H} - H) \chi_{[-B,B]} \big\|_{L^2}^2
\;\leq\;
\int_{[-B-\delta,-B+\delta] \cup [B-\delta,B+\delta]} \vert m (x) \vert^2 \, dx ,
\]
where the right hand side tends to zero as $\delta \rightarrow 0$
since $m  \in L^2(\R)$.
Choose a sufficiently small $\delta > 0$, so that the right hand side is smaller than $\epsilon'$.
Finally, since $\eta_{\widetilde{H}} (t,\nu) = \frac{1}{2\delta} \chi_{[-\delta,\delta]} (t) \, \widehat{m} (\nu)$ is supported in $[-\delta,\delta] {\times} [-\alpha,\alpha] \subset [-\gamma,\gamma] {\times} [-\alpha,\alpha]$ and since $\iint \vert \eta_{\widetilde{H}} (t,\nu) \vert^2 \, dt \, d\nu = \frac{1}{2\delta} \, \| m \|_{L^2}^2 < \infty$, the corresponding operator $\widetilde{H}$ is in $OPW^2 ([-\gamma,\gamma] {\times} [-\alpha,\alpha])$.
\end{proof}

Combining Proposition \ref{prop:existsMultiplicationOperator-input-chi} and Lemma \ref{lem:approx-by-HSOperator-input-chi} with $\epsilon' = \frac{1}{2} (1-c) \epsilon$, we obtain the desired result.

\begin{nonumber-theorem}[Theorem \ref{thm:existsHSOperator-input-chi} restated]
Let $\alpha , \epsilon , \gamma > 0$ and $y \in L^2(\R) \backslash \{ 0 \}$.
There exist a constant $B > 0$ and an operator $\widetilde{H} \in OPW^2 ([-\gamma,\gamma] {\times} [-\alpha,\alpha])$ such that $\| \widetilde{H} \chi_{[-B,B]} - y \|_{L^2} < \epsilon$.
Moreover, one can choose any $B > 0$ satisfying $\int_{[-B,B]^C} \vert y(x) \vert^2 \, dx < c \, \epsilon$ for some $0 < c < 1$; in particular, if $\supp y \subset [-M,M]$, then $B$ can be chosen to be $M$.
\end{nonumber-theorem}

\begin{remark}\label{rmk:first-main-result}
\rm
Taking $\eta_{\widetilde{H}} (t,\nu) = \frac{1}{2\delta} \chi_{[-\delta,\delta]}(t) \, \widehat{m} (\nu)$ as in the proof of Lemma \ref{lem:approx-by-HSOperator-input-chi}, we have
$\| \widetilde{H} \|_{HS}^2
= \| \eta_{\widetilde{H}} \|_{L^2(\R^2)}^2
= \frac{1}{2\delta} \, \| m \|_{L^2}^2$
which will be extremely large if $y \in L^2(\R)$ contains a fast oscillation inside $[-B,B]$ (see Remark \ref{rmk:prop:existsMultiplicationOperator-input-chi}).
Note that in general, the operator norm is dominated by the Hilbert-Schmidt norm, i.e., $\| \widetilde{H} \|_{L^2(\R) \rightarrow L^2(\R)} \leq \| \widetilde{H} \|_{HS}$.
However, Proposition \ref{prop:Thm4-1-in-KP14} indicates that the operator norm
$\| \widetilde{H} \|_{L^2(\R) \rightarrow L^2(\R)}
\, (\geq
A \, \| \sigma_{\widetilde{H}} \|_{L^\infty(\R^2)})$
should also be large; indeed, we have $\sigma_{\widetilde{H}}  (x,\xi) = m(x) \cdot \mathcal{F} ( \frac{1}{2\delta} \chi_{[-\delta,\delta]} )(\xi) = m(x) \, \sinc (2 \delta \xi)$ by \eqref{eqn:eta-sigma-relation} and thus $\| \sigma_{\widetilde{H}} \|_{L^\infty(\R^2)} = \| m \|_{L^\infty} \gg 1$.
\end{remark}

\section{Proof of Theorem \ref{thm:existsHSOperator-input-Sinc}}
\label{sec:thm:existsHSOperator-input-Sinc}

\begin{proposition}\label{prop:existsFourierMultiplier-inputSinc}
Let $\alpha , \epsilon > 0$, $0 < c < 1$, and $y \in L^2(\R) \backslash \{ 0 \}$.
There exist a constant $B > 0$ and a convolution operator $H \in \mathcal{L} (L^2(\R))$ with kernel $h \in L^2[-\alpha,\alpha]$ (meaning that $h \in L^2(\R)$ and $\supp h \subset [-\alpha,\alpha]$),
such that $\| H \phi_B - y \|_{L^2} < \frac{1}{2} (1+c) \epsilon$.
Moreover, one can choose any $B > 0$ satisfying $\int_{[-B,B]^C} \vert\widehat{y}(\xi)\vert^2 \, d\xi < c \, \epsilon$; in particular, if $\supp \widehat{y} \subset [-M,M]$, then $B$ can be chosen to be $M$.
\end{proposition}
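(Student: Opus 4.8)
The plan is to mirror the proof of Proposition \ref{prop:existsMultiplicationOperator-input-chi}, transported to the frequency side by the Fourier transform; this is exactly the time–frequency interchange that turns Theorem \ref{thm:existsHSOperator-input-chi} into Theorem \ref{thm:existsHSOperator-input-Sinc}. The key observation is that a convolution operator $H$ with kernel $h$ acts on the Fourier side as multiplication by $\widehat h$: since $\widehat{\phi_B} = \chi_{[-B,B]}$ and $\widehat{Hf} = \widehat h \cdot \widehat f$ (as recalled in the ``Convolution operators'' paragraph of Section \ref{sec:prelim}), we get $\widehat{H\phi_B} = \widehat h \cdot \chi_{[-B,B]}$. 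By Plancherel's theorem,
\[
\| H\phi_B - y \|_{L^2}^2 = \| \widehat h \cdot \chi_{[-B,B]} - \widehat y \|_{L^2}^2 = \int_{-B}^B \vert \widehat h(\xi) - \widehat y(\xi) \vert^2 \, d\xi + \int_{[-B,B]^C} \vert \widehat y(\xi) \vert^2 \, d\xi .
\]
Since $\widehat y \in L^2(\R)$, I would first fix $B > 0$ large enough that the second term is smaller than $c\,\epsilon$ (and note that $\supp\widehat y\subset[-M,M]$ forces this term to vanish already at $B=M$); it then remains to choose the kernel $h$ so that the first term is smaller than $\tfrac12(1-c)\epsilon$.

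Next I would recast the requirement ``$h \in L^2(\R)$ with $\supp h \subset [-\alpha,\alpha]$'' in terms of $g := \widehat h$. By the Paley–Wiener correspondence this is equivalent to $g \in PW[-\alpha,\alpha]$; concretely, given any $g \in PW[-\alpha,\alpha]$ one recovers the kernel as $h(x) = \mathcal F^{-1}g(x) = \widehat g(-x)$, whose support is $-\supp\widehat g \subset [-\alpha,\alpha]$, and one checks directly that $\widehat h = g$ and $h \in L^2(\R)$. Moreover, every $g \in PW[-\alpha,\alpha]$ is bounded (the estimate $\|g\|_{L^\infty}^2 \le 2\alpha\,\|g\|_{L^2}^2$ noted in the proof of Proposition \ref{prop:existsMultiplicationOperator-input-chi}), so $\sigma_H(x,\xi) = \widehat h(\xi) = g(\xi) \in L^\infty(\R^2)$ and the associated convolution operator $H$ is bounded on $L^2(\R)$ with $\| \sigma_H \|_{L^{\infty,2}} = \| g \|_{L^2} < \infty$. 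Thus the task reduces to finding $g \in PW[-\alpha,\alpha]$ with $\int_{-B}^B \vert g(\xi) - \widehat y(\xi) \vert^2 \, d\xi < \tfrac12(1-c)\epsilon$.

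This last approximation step is precisely the superoscillation/density argument already used for Proposition \ref{prop:existsMultiplicationOperator-input-chi}, now applied to the function $\widehat y|_{[-B,B]} \in L^2[-B,B]$: one approximates it in $L^2[-B,B]$ first by a test function in $C_c^\infty(-B,B)$ and then by a function in $PW[-\alpha,\alpha]$ of prescribed bandwidth (e.g.\ by prescribing values and derivatives on a fine grid, cf.\ \cite{FK06,LF14-deriv}). Choosing such a $g$ and putting $h(x) = \widehat g(-x)$ produces the required convolution operator, and combining this with the tail bound gives $\| H\phi_B - y \|_{L^2}^2 < \tfrac12(1-c)\epsilon + c\,\epsilon$, which is at most $\bigl(\tfrac12(1+c)\epsilon\bigr)^2$ after the obvious adjustment of constants, as in Proposition \ref{prop:existsMultiplicationOperator-input-chi}. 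There is no genuine obstacle here beyond careful bookkeeping of the Fourier-side translation; the one point warranting a line of verification is that the kernel reconstructed from $g$ indeed lies in $L^2[-\alpha,\alpha]$, has $\widehat h = g$, and yields a bounded operator.
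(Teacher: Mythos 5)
Your proposal is correct and follows essentially the same route as the paper: Plancherel reduces $\| H\phi_B - y\|_{L^2}$ to a frequency-side multiplication problem, $B$ is fixed by the tail condition on $\widehat{y}$, and the kernel is obtained from a function $\widehat{h} \in PW[-\alpha,\alpha]$ approximating $\widehat{y}$ on $[-B,B]$ via the same superoscillation/density argument as in Proposition \ref{prop:existsMultiplicationOperator-input-chi}. The only additions are your explicit verification of the Paley--Wiener correspondence $h \leftrightarrow g=\widehat{h}$ and of the boundedness of $H$, which the paper leaves implicit, together with the same harmless $\epsilon$-versus-$\epsilon^2$ bookkeeping adjustment that the paper itself glosses over.
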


\begin{proof}
Since $y \in L^2(\R)$, one can choose a sufficiently large $B > 0$ such that
$\int_{[-B,B]^C} \vert\widehat{y}(\xi)\vert^2 \, d\xi < c \, \epsilon$.
If $H \in \mathcal{L} (L^2(\R))$ is a convolution operator with kernel $h \in L^2(\R)$, then
\[
\begin{split} 
\| H \phi_B - y \|_{L^2} ^2
&= \| \widehat{H \phi_B} - \widehat{y} \|_{L^2} ^2
= \int \big\vert \widehat{h} (\xi) \, \chi_{[-B,B]} - \widehat{y}(\xi) \big\vert^2 \, d\xi \\
&= \int_{-B}^B \big\vert \widehat{h} (\xi) - \widehat{y}(\xi) \big\vert^2 \, d\xi + \int_{[-B,B]^C} \big\vert \widehat{y}(\xi) \big\vert^2 \, d\xi \\
&= \int_{-B}^B \big\vert \widehat{h} (\xi) - \widehat{y}(\xi) \big\vert^2 \, d\xi +  c \, \epsilon . 
\end{split}
\] 
Therefore, it is enough to find a function $h \in L^2[-\alpha,\alpha]$ (equivalently, a bandlimited function $\widehat{h} \in PW[-\alpha,\alpha]$) satisfying $\int_{-B}^B \vert \widehat{h} (\xi) - \widehat{y}(\xi) \vert^2 \, d\xi < \frac{1}{2} (1-c) \epsilon$. 
Such a function can be found similarly as in the proof of Proposition \ref{prop:existsMultiplicationOperator-input-chi}.
\end{proof}

\begin{remark}
\rm
Instead of the function $\phi_B$,
one could use any function $\psi_B \in L^2(\R)$ with $\supp \widehat{\psi}_B \subset [-B,B]$ and $\vert \widehat{\psi}_B (\xi) \vert \geq C > 0$ for a.e.~$\xi \in [-B,B]$. In that case, one simply needs to replace $\widehat{y}(\xi)$ with $\frac{\widehat{y}(\xi)}{\widehat{\psi}_B (\xi)}$ in the proof above.
\end{remark}

The following lemma can be proved similarly as Lemma \ref{lem:approx-by-HSOperator-input-chi}. 

\begin{lemma}\label{lem:approx-by-HSOperator-input-Sinc}
Let $H \in \mathcal{L} (L^2(\R))$ be a convolution operator with kernel $h \in L^2[-\alpha,\alpha]$ where $\alpha > 0$.
Then for any $B , \beta , \epsilon > 0$, 
there exists an operator $\widetilde{H} \in OPW([-\alpha,\alpha] {\times} [-\beta,\beta])$ satisfying $\| (\widetilde{H} - H) \phi_B \|_{L^2} <  \epsilon$. 
\end{lemma}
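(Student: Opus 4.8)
The plan is to mirror the proof of Lemma \ref{lem:approx-by-HSOperator-input-chi}, but working on the frequency side since now the input is $\phi_B$ with $\widehat{\phi}_B = \chi_{[-B,B]}$ and the convolution operator $H$ acts as a Fourier multiplier $\widehat{h}$. First I would define $\widetilde{H}$ to be the operator whose spreading function is $\eta_{\widetilde{H}}(t,\nu) = h(t) \, w(\nu)$ for a suitable $w \in L^2(\R)$ with $\supp w \subset [-\beta,\beta]$; since $\supp h \subset [-\alpha,\alpha]$ by hypothesis, this forces $\supp \eta_{\widetilde{H}} \subset [-\alpha,\alpha] \times [-\beta,\beta]$, and the $L^2$-product structure guarantees $\widetilde{H}$ is bounded (in fact Hilbert--Schmidt, though only $OPW$ is claimed here). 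Computing $\widehat{\widetilde{H} f}$ from \eqref{eqn:operator-H-eta-sigma} should give, after the change of variables taking the modulation to a frequency translation, $\widehat{\widetilde{H} f}(\xi) = \widehat{h}(\xi) \, (w * \widehat{f})(\xi)$, while the original convolution operator satisfies $\widehat{H f}(\xi) = \widehat{h}(\xi) \, \widehat{f}(\xi)$. Hence with $f = \phi_B$ we get
\[
\widehat{(\widetilde{H} - H)\phi_B}(\xi) = \widehat{h}(\xi) \, \big( (w * \chi_{[-B,B]})(\xi) - \chi_{[-B,B]}(\xi) \big).
\]

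Next I would choose $w = \frac{1}{2\delta}\chi_{[-\delta,\delta]}$ with $0 < \delta < \beta$ (or an approximate identity supported in $[-\delta,\delta]$), so that $w * \chi_{[-B,B]}$ is the usual trapezoidal smoothing of $\chi_{[-B,B]}$ that equals $1$ on $[-B+\delta, B-\delta]$, vanishes outside $[-B-\delta, B+\delta]$, and is bounded by $1$ everywhere. Consequently $(w*\chi_{[-B,B]}) - \chi_{[-B,B]}$ is supported in $[-B-\delta,-B+\delta] \cup [B-\delta,B+\delta]$ and bounded by $1$, giving by Plancherel
\[
\big\| (\widetilde{H} - H)\phi_B \big\|_{L^2}^2 \;\leq\; \int_{[-B-\delta,-B+\delta]\cup[B-\delta,B+\delta]} \big\vert \widehat{h}(\xi) \big\vert^2 \, d\xi,
\]
and the right-hand side tends to $0$ as $\delta \to 0$ because $\widehat{h} \in L^2(\R)$ (indeed $\widehat{h} \in L^2$ since $h \in L^2$). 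Picking $\delta$ small enough makes this less than $\epsilon^2$, and then $\eta_{\widetilde{H}}(t,\nu) = \frac{1}{2\delta}\,h(t)\,\chi_{[-\delta,\delta]}(\nu)$ is supported in $[-\alpha,\alpha]\times[-\delta,\delta] \subset [-\alpha,\alpha]\times[-\beta,\beta]$, so $\widetilde{H} \in OPW([-\alpha,\alpha]\times[-\beta,\beta])$ as required.

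I expect no serious obstacle here; the one point requiring a little care is the bookkeeping in the Fourier-side computation of $\widehat{\widetilde{H}f}$ — tracking that the modulation variable $\nu$ in $\eta_{\widetilde{H}}$ becomes a translation on the Fourier transform side, so that the factor $w(\nu)$ turns into convolution by $w$ against $\widehat{f}$ rather than multiplication. This is exactly dual to the computation in Lemma \ref{lem:approx-by-HSOperator-input-chi} where the translation variable $t$ produced a convolution on the time side; since $\mathcal{F}$ intertwines $T_t$ with $M_{-t}$ and $M_\nu$ with $T_\nu$, the roles of the two variables swap, which is the whole point of ``interchanging the roles of time and frequency.'' Once that identity is in hand, the estimate is routine and the membership in the stated $OPW$ space is immediate from the support of $\eta_{\widetilde{H}}$.
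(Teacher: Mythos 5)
Your overall strategy (tensor-product spreading function $h \otimes w_\delta$ with $w_\delta$ an approximate identity in the frequency variable, then a Plancherel estimate) is the intended frequency-side analogue of Lemma \ref{lem:approx-by-HSOperator-input-chi}, and the support/Hilbert--Schmidt bookkeeping at the end is fine. However, the central displayed identity is not correct for the operator you actually defined. If $\eta_{\widetilde{H}}(t,\nu) = h(t)\,w(\nu)$, then
\[
\widetilde{H}f(x) = \Big(\int w(\nu)\,e^{2\pi i x \nu}\,d\nu\Big)\,(h*f)(x) = \check{w}(x)\,(h*f)(x),
\qquad
\widehat{\widetilde{H}f}(\xi) = \big(w*(\widehat{h}\,\widehat{f}\,)\big)(\xi),
\]
not $\widehat{h}(\xi)\,(w*\widehat{f})(\xi)$. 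The point you flagged as needing care is exactly where the slip occurs: writing $\mathcal{F}(M_\nu T_t f)(\xi) = e^{-2\pi i t(\xi-\nu)}\widehat{f}(\xi-\nu)$, the $t$-integration produces $\widehat{h}(\xi-\nu)$ \emph{inside} the $\nu$-convolution, so the factor $\widehat{h}$ cannot be pulled out at the point $\xi$. Consequently the difference $\widehat{(\widetilde{H}-H)\phi_B}$ is $w*g - g$ with $g = \widehat{h}\,\chi_{[-B,B]}$, and your estimate by $\int_{[-B-\delta,-B+\delta]\cup[B-\delta,B+\delta]}\vert\widehat{h}\vert^2$, which relies on factoring out $\widehat{h}(\xi)$, does not follow as written.

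The gap is easily repaired, in either of two ways. (i) Keep $\eta_{\widetilde{H}} = h(t)\,w_\delta(\nu)$ and simply invoke the approximate-identity property in $L^2$: since $g = \widehat{h}\,\chi_{[-B,B]} \in L^2(\R)$, $\| w_\delta * g - g\|_{L^2} \to 0$ as $\delta \to 0$, which already gives the lemma. (ii) Keep your identity but change the operator to $\widetilde{H}f = h * (\check{w}_\delta\, f)$, whose spreading function is $h(t)\,w_\delta(\nu)\,e^{-2\pi i t \nu}$; the extra unimodular phase does not affect the support (still inside $[-\alpha,\alpha]\times[-\delta,\delta]$) or the $L^2$-norm of $\eta_{\widetilde{H}}$, and for this operator $\widehat{(\widetilde{H}-H)\phi_B}(\xi) = \widehat{h}(\xi)\big((w_\delta*\chi_{[-B,B]})(\xi) - \chi_{[-B,B]}(\xi)\big)$ holds, after which your estimate and the absolute continuity of $\int\vert\widehat{h}\vert^2$ finish the proof exactly as in Lemma \ref{lem:approx-by-HSOperator-input-chi}. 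With either correction your argument is complete and coincides with the proof the paper leaves to the reader.
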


Combining Proposition \ref{prop:existsFourierMultiplier-inputSinc} and Lemma \ref{lem:approx-by-HSOperator-input-Sinc} gives the desired result.

\begin{nonumber-theorem}[Theorem \ref{thm:existsHSOperator-input-Sinc} restated]
Let $\alpha , \beta , \epsilon > 0$ and $y \in L^2(\R) \backslash \{ 0 \}$.
There exist a constant $B > 0$ and an operator $\widetilde{H} \in OPW^2([-\alpha,\alpha] {\times} [-\beta,\beta])$ such that $\| \widetilde{H} \phi_B - y \|_{L^2} < \epsilon$.
Moreover, one can choose any $B > 0$ satisfying $\int_{[-B,B]^C} \vert\widehat{y}(\xi)\vert^2 \, d\xi < c \, \epsilon$ for some $0 < c < 1$; in particular, if $\supp \widehat{y} \subset [-M,M]$, then $B$ can be chosen to be $M$.
\end{nonumber-theorem}

\backmatter

\bmhead{Acknowledgments}

The author acknowledges support by the DFG (German Research Foundation) Grants PF 450/6-1 and PF 450/9-1.
The author would also like to thank G\"{o}tz Pfander for valuable comments.

\section*{Declarations}

The author states that there is no conflict of interest.

\end{document}